\pgfplotsset{compat = newest}
\newtheorem{theorem}{Theorem}
\newtheorem{corollary}[theorem]{Corollary}
\newtheorem{lemma}[theorem]{Lemma}
\newtheorem{proposition}[theorem]{Proposition}
\theoremstyle{definition}
\newtheorem{example}[theorem]{Example}
\newcommand{\R}{\mathbb{R}}
\newcommand{\N}{\mathbb{N}}
\newcommand{\func}{f}
\newcommand{\scaledfunc}{F}
\newcommand{\funcmin}{\func_{\min}}
\newcommand{\funcmax}{\func_{\max}}
\newcommand{\scaledfuncmin}{\scaledfunc_{\min}}
\newcommand{\scaledfuncmax}{\scaledfunc_{\max}}
\newcommand{\polys}[1]{\R[{#1}]}
\newcommand{\sos}[1]{\Sigma[{#1}]}
\newcommand{\positivecone}[1]{\mathcal{P}(\cube{{#1}})}
\newcommand{\richquadmodule}[1]{{Q}(\cube{n})_{#1}}
\newcommand{\cube}[1]{{\rm B}^{#1}}
\newcommand{\semialge}[1]{g_{#1}}
\newcommand{\richlowbound}[1]{{\func_{({#1})}}}
\newcommand{\infnorm}[1]{\| {#1} \|_{\infty}}
\newcommand{\Chebyinner}[2]{\langle {#1}, {#2} \rangle_\mu}
\newcommand{\Chebynorm}[1]{\| {#1} \|_{\mu}}
\newcommand{\Cheby}[1]{T_{#1}}
\newcommand{\Kernel}{\mathbf{K}}
\newcommand{\jacksoncoef}[2]{\lambda_{#1}^{#2}}
\newcommand{\jackpoly}{K^{\rm ja}}
\newcommand{\multijackpoly}{K}
\newcommand{\jackkernel}{\mathbf{K}^{\rm ja}}
\newcommand{\x}{\mathbf{x}}
\newcommand{\y}{\mathbf{y}}
\title[Schm\"udgen's Positivstellensatz for the hypercube]{An effective version of Schm\"udgen's Positivstellensatz for the hypercube}
\author{Monique Laurent}
\address{Centrum Wiskunde \& Informatica (CWI), Amsterdam and Tilburg University}
\email{monique.laurent@cwi.nl}
\author{Lucas Slot}
\address{Centrum Wiskunde \& Informatica (CWI), Amsterdam}
\email{lucas.slot@cwi.nl}
\subjclass[2010]{90C22, 90C23, 90C26}
\date{\today}
\begin{document}

\begin{abstract}
\small
Let $S \subseteq \R^n$ be a compact semialgebraic set and let $f$ be a polynomial nonnegative on $S$. Schm\"udgen's Positivstellensatz then states that for any $\eta > 0$, the nonnegativity of $f + \eta$ on $S$ can be certified by expressing $f + \eta$ as a conic combination of products of the polynomials that occur in the inequalities defining $S$, where the coefficients are  (globally nonnegative) sum-of-squares polynomials.
It does not, however, provide explicit  bounds on the degree of the polynomials required for such an expression. 
We show that in the special case where $S = [-1, 1]^n$ is the hypercube, a Schm\"udgen-type certificate of nonnegativity exists involving only polynomials of degree $O(1 / \sqrt{\eta})$. This improves quadratically upon the previously best known estimate in $O(1/\eta)$. Our proof relies on an application of the polynomial kernel method, making use in particular of the Jackson kernel on the interval $[-1, 1]$.
\end{abstract}

\maketitle

\section{Introduction}
Consider the problem of computing the global minimum:
\begin{equation}
    \label{EQ:mainproblem}
    \funcmin := \min_{\x \in \cube{n}} f(\x)
\end{equation}
of a polynomial $f$ of degree $d \in \N$ over the hypercube $\cube{n} := [-1,1]^n \subseteq \R^n$.
The program \eqref{EQ:mainproblem} can be reformulated as finding the largest $\lambda \in \R$ for which the function $f - \lambda$ is nonnegative on $\cube{n}$. That is, writing $\positivecone{n} \subseteq \polys{x}$ for the cone of all polynomials that are nonnegative on $\cube{n}$, we have:
\begin{equation}
    \label{EQ:nonnegativereformulation}
    \funcmin = \max \{ \lambda \in \R : f - \lambda \in \positivecone{n} \}.
\end{equation}
By replacing $\positivecone{n}$ in  \eqref{EQ:nonnegativereformulation} by a smaller subset of $\R[\x]$ one may obtain lower bounds on $\funcmin$.
One way of obtaining such subsets is based on the following description of $\cube{n}$ as a semialgebraic set:
\begin{equation} \label{EQ:semialgdesc}
    \cube{n} = \{ \x \in \R^n : \semialge{i}(\x) := (1-x_i^2) \geq 0 \quad \forall i \in [n] \}.
\end{equation}
In light of this description, we see that the \emph{preordering} $\richquadmodule{r}$, truncated at degree~$ r$,  defined by\footnote{{Sometimes the index $r$ is used in the literature to denote the truncation where all summands have degree at most $2r$. For our treatment here it is more convenient to let $r$ denote the truncation where all summands have degree at most $r$, the main reason being our use later of Theorem \ref{THM:MarkovLukacz}.}}:
\begin{equation}\label{eqmodule}
\richquadmodule{r} := \{\sum_{J \subseteq [ n]} \sigma_J \semialge{J} : \sigma_J \in \sos{\x}, ~\deg(\sigma_J g_J) \leq  r \} \quad (\semialge{J} := \prod_{j \in J} \semialge{j}),
\end{equation}
satisfies $\richquadmodule{r} \subseteq \positivecone{n}$ for all $r \in \N$. Here, $\sos{\x}$ is the set of sum-of-squares polynomials (i.e., of the form $p = p_1^2 + p_2^2 + \ldots + p_m^2$ for certain $p_i \in \R[\x]$). When no degree bounds are imposed (i.e., $r=\infty$) we obtain the full preordering $\richquadmodule{}$ generated by the polynomials $g_i(\x)=1-x_i^2$ ($i\in [n])$, which coincides with the quadratic module generated by the products $\prod_{i\in I}g_i(\x)$ ($I\subseteq [n]$). We thus obtain the following hierarchy of lower bounds for $\funcmin$, due to Lasserre \cite{Lasserre2001}:
\begin{equation}\label{EQ:lasr}
    \richlowbound{r} := \max \{ \lambda \in \R : f - \lambda \in \richquadmodule{r} \}. 
\end{equation}
If the program \eqref{EQ:lasr} is feasible, its maximum is attained. By definition, we have $\funcmin \geq \richlowbound{r + 1} \geq \richlowbound{r}$ for all $r \in \N$. Furthermore, we have $\lim_{r \to \infty} \richlowbound{r} = \funcmin$, which follows directly from the following special case of \emph{Schm\"udgen's Positvstellensatz}.

\begin{theorem}[Special case of Schm\"udgen's Positivstellensatz \cite{Schmudgen1991}]
\label{THM:positvstellensatz}
Let $f \in \positivecone{n}$ be a polynomial. Then for any $\eta > 0$ there exists an $r \in \N$ such that $f + \eta \in \richquadmodule{r}$.
\end{theorem}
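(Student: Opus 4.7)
My plan is to prove Theorem \ref{THM:positvstellensatz} by recognizing the statement as a direct specialization of Schm\"udgen's general Positivstellensatz to the hypercube. The overall proof has three short steps: verify the Archimedean condition for the generators $\semialge{i}$, invoke the general theorem applied to $f + \eta$, and extract a finite degree bound from the resulting representation.

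First, I would verify that the generators $\semialge{i}(\x) = 1 - x_i^2$ satisfy the Archimedean condition on $\richquadmodule{}$. This is immediate: for any $N \geq n$,
\[
    N - \|\x\|_2^2 \;=\; (N - n) + \sum_{i=1}^n \semialge{i}(\x) \;\in\; \richquadmodule{2},
\]
so $N - \|\x\|^2$ lies in the preordering (in fact, already in the quadratic module generated by the $\semialge{i}$) for all sufficiently large $N$. Hence $\cube{n}$ is Archimedean in the standard sense.

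Second, since $\cube{n}$ is compact and $f \in \positivecone{n}$, the polynomial $f + \eta$ is strictly positive on $\cube{n}$, with $f + \eta \geq \eta > 0$. I would then invoke Schm\"udgen's general Positivstellensatz (\cite{Schmudgen1991}), which asserts that any polynomial strictly positive on a compact basic semialgebraic set with Archimedean generators lies in the associated preordering. Applied to $f + \eta$, this yields a finite representation $f + \eta = \sum_{J \subseteq [n]} \sigma_J \semialge{J}$ with $\sigma_J \in \sos{\x}$; setting $r := \max_J \deg(\sigma_J \semialge{J}) - 1 < \infty$ then gives $f + \eta \in \richquadmodule{r}$, as desired.

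The qualitative statement poses no genuine obstacle in this approach -- all the work is packaged into the classical proof of Schm\"udgen, used here as a black box. The real difficulty, which is the main subject of the paper, is obtaining \emph{quantitative} control of $r$ in terms of $\eta$ (and $\deg f$): establishing the bound $r = O(1/\sqrt{\eta})$ announced in the abstract would require a constructive proof, presumably via the polynomial kernel method using the Jackson kernel, rather than the black-box application of Schm\"udgen outlined above. I expect that step to hinge on constructing, for each $r$, an explicit approximant lying in $\richquadmodule{O(r)}$ whose deviation from $f + \eta$ on $\cube{n}$ is controlled via Jackson-type estimates.
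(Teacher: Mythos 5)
The paper does not actually prove this theorem: it is stated as a known consequence of Schm\"udgen's general Positivstellensatz \cite{Schmudgen1991} and cited without argument. Your derivation from that general theorem is correct, and the truncation step (defining $r$ as the maximum degree appearing in the finite representation) is exactly the right way to pass from the untruncated preordering $\richquadmodule{}$ to some $\richquadmodule{r}$. One small remark: the Archimedean verification in your first step is superfluous here. Schm\"udgen's Positivstellensatz for the \emph{preordering} requires only that the set $S$ be compact; the Archimedean property of the preordering is then a consequence (proved by Schm\"udgen as part of his theorem), not a hypothesis one must check. What you verified --- that $N-\|\x\|_2^2$ lies already in the quadratic module generated by the $\semialge{i}$ --- would instead be the relevant hypothesis for \emph{Putinar's} Positivstellensatz, which yields the stronger conclusion $f+\eta\in\{\sigma_0+\sum_i\sigma_i\semialge{i}\}$; that is more than the statement asks for, and is not needed. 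Dropping that first paragraph leaves a clean two-step proof: $\cube{n}$ is compact and $f+\eta\ge\eta>0$ on $\cube{n}$, so Schm\"udgen gives a finite preordering representation, and taking $r$ as the maximum occurring degree gives $f+\eta\in\richquadmodule{r}$.
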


\subsection{Main result}
We show a bound on the convergence rate of the lower bounds $\richlowbound{r}$ to the global minimum $\funcmin$ of $\func$ over $\cube{n}$ in $O(1/r^2)$. Alternatively, our result can be interpreted as a bound on the degree $r$ in Schm\"udgen's Positivstellensatz of the order $O(1/\sqrt{\eta})$ of a positivity certificate for $f+\eta$ when $f \in \positivecone{n}$.
\begin{theorem}
\label{THM:main}
Let $f$ be a polynomial of degree $d \in \N$. Then there exists a constant $C(n, d) > 0$, depending only on $n$ and $d$, such that:
\begin{equation}\label{eq:main}
\funcmin - \richlowbound{{(r+1)}n} \leq \frac{C(n, d)}{r^2} \cdot (\funcmax - \funcmin)\quad \text{ for all } r \geq \pi d \sqrt{2n}.
\end{equation}
Furthermore, the constant $C(n, d)$ may be chosen such that it either depends polynomially on $n$ (for fixed $d$) or it depends polynomially on $d$ (for fixed $n$), see relation (\ref{eqCnd}) for details.
\end{theorem}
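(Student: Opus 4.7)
The plan is to construct, for any $\func$ of degree $d$ and $r \geq \pi d\sqrt{2n}$, an explicit element of $\richquadmodule{rn}$ of the form $(\func - \funcmin) + \eta$ with $\eta \leq C(n,d)\cdot r^{-2}\cdot(\funcmax - \funcmin)$. Writing $\scaledfunc := \func - \funcmin \geq 0$ on $\cube{n}$ (so that $\infnorm{\scaledfunc} = \funcmax - \funcmin$), this gives $\richlowbound{rn} \geq \funcmin - \eta$, yielding the claimed bound \eqref{eq:main}.

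The key tool is the polynomial kernel method. Introduce the univariate Jackson kernel
$\jackkernel_r(x,y) = \sum_{k=0}^{r} \jacksoncoef{k}{r}\,\Cheby{k}(x)\Cheby{k}(y),$
whose coefficients satisfy $\jacksoncoef{k}{r} \geq 0$, $\jacksoncoef{0}{r}=1$, and $1-\jacksoncoef{k}{r} = O(k^2/r^2)$, while $\jackkernel_r$ is nonnegative on $[-1,1]^2$ and $\int \jackkernel_r(x,y)\,d\mu(y)=1$ for the Chebyshev measure $\mu$. Form the product kernel $\multijackpoly_r(\x,\y)=\prod_{i=1}^n \jackkernel_r(x_i,y_i)$ and the smoothing operator $Tg(\x) := \int_{\cube{n}} g(\y)\,\multijackpoly_r(\x,\y)\,d\mu(\y)$, where $\mu$ now denotes the product Chebyshev measure on $\cube{n}$. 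Expanding in the multivariate Chebyshev basis, $T$ acts as the multiplier $\hat{g}_{\mathbf{k}}\mapsto \prod_i \jacksoncoef{k_i}{r}\,\hat{g}_{\mathbf{k}}$, which fixes constants and preserves the total-degree filtration ($\deg Tg \leq \deg g$). For $g$ of total degree $d$, summing $1-\prod_i \jacksoncoef{k_i}{r} \leq \sum_i O(k_i^2/r^2)$ over $|\mathbf{k}|\leq d$ yields the approximation bound $\infnorm{g - Tg} \leq \rho\,\infnorm{g}$ with $\rho = C(n,d)/r^2$, and $C(n,d)$ polynomial in $d$ (fixed $n$) and in $n$ (fixed $d$), matching the claim.

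The heart of the proof is a preordering decomposition of the kernel yielding $Tg \in \richquadmodule{rn}$ whenever $g\geq 0$ on $\cube{n}$. Concretely, I would establish an identity
$\jackkernel_r(x,y) = \sigma_0(x,y) + \sigma_1(x,y)(1-x^2)$
in which each $\sigma_\alpha(x,y) = \sum_j \gamma_{\alpha,j}(y) h_{\alpha,j}(x)^2$ with $\gamma_{\alpha,j}(y)\geq 0$ on $[-1,1]$; this is a $y$-parametric refinement of Markov--Luk\'acs, obtainable from the explicit Chebyshev structure of the Jackson kernel (and, if needed, from a Gauss--Chebyshev quadrature of sufficient order to realize positive combinations). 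Taking tensor products across the coordinates gives
$\multijackpoly_r(\x,\y) = \sum_{J\subseteq[n]}\semialge{J}(\x)\,\Sigma_J(\x,\y),$
with $\Sigma_J(\x,\y) = \sum_k \Gamma_{J,k}(\y)\,H_{J,k}(\x)^2$, $\Gamma_{J,k}\geq 0$ on $\cube{n}$, and $\deg_\x(\Sigma_J \semialge{J})\leq rn$. Integrating against a nonnegative $g$ on $\cube{n}$ then produces
$Tg(\x) = \sum_J \semialge{J}(\x)\sum_k H_{J,k}(\x)^2 \int_{\cube{n}} \Gamma_{J,k}(\y) g(\y)\,d\mu(\y),$
in which each scalar integral is nonnegative, so $Tg \in \richquadmodule{rn}$.

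Finally, to promote $Tg \in \richquadmodule{rn}$ into $\scaledfunc + \eta \in \richquadmodule{rn}$, I would iterate. Set $g_0 := \scaledfunc$ and recursively $g_{i+1} := g_i - Tg_i + \epsilon_i$ with $\epsilon_i := \infnorm{g_i - Tg_i}$, so that $g_i\geq 0$ on $\cube{n}$, $\deg g_i \leq d$ (preserved by the multiplier action of $T$), and $\infnorm{g_{i+1}} \leq 2\epsilon_i \leq 2\rho\,\infnorm{g_i}$. Telescoping gives
$\scaledfunc + \sum_{i\geq 0}\epsilon_i = \sum_{i\geq 0} Tg_i,$
and once $2\rho<1$ (ensured by $r \geq \pi d\sqrt{2n}$ up to constants) the right-hand side converges coefficient-wise, inside the closed cone $\richquadmodule{rn}$, to a decomposition of $\scaledfunc + \eta$ with $\eta := \sum_i \epsilon_i \leq \epsilon_0/(1-2\rho) = O(1/r^2)\,\infnorm{\scaledfunc}$, delivering the stated estimate. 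The main obstacle is the structural lemma on the Jackson kernel: producing a Markov--Luk\'acs-type decomposition whose auxiliary-variable coefficients are polynomial in $y$ with controlled sign on $[-1,1]$, and tracking degrees carefully through the tensor product so that the total degree in $\x$ stays within the budget $rn$.
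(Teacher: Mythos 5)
Your proposal uses the same core ingredients as the paper --- the multivariate product of Jackson kernels on $[-1,1]$, the diagonalization of the associated operator in the Chebyshev basis with eigenvalues $\prod_i\jacksoncoef{\kappa_i}{r}$, the bound $1-\jacksoncoef{k}{r}=O(k^2/r^2)$ to control the approximation error, and a Markov--Luk\'acs/quadrature argument to obtain property \eqref{PROPERTY:incone} --- so the analytic content is essentially the same. The genuine difference is in how you pass from ``$T$ is a good approximate identity that maps $\positivecone{n}_d$ into $\richquadmodule{rn}$'' to ``$F+\eta\in\richquadmodule{rn}$''. The paper observes that $\Kernel_r$ is trivially invertible (it is diagonal with strictly positive eigenvalues by Proposition~\ref{PROP:jacksoncoefficients}(ii)), bounds $\infnorm{\Kernel_r^{-1}\tilde F-\tilde F}$ directly via $|1-1/\jacksoncoef{\kappa}{r}|$, concludes $\Kernel_r^{-1}\tilde F\in\positivecone{n}_d$, and then writes $\tilde F=\Kernel_r(\Kernel_r^{-1}\tilde F)\in\richquadmodule{rn}$ --- a finite, closed-form argument. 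You instead run a Neumann-type iteration $g_{i+1}=g_i-Tg_i+\epsilon_i$, telescope to $F+\sum_i\epsilon_i=\sum_i Tg_i$, and appeal to closedness of $\richquadmodule{rn}$ to pass to the limit of the partial sums.

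Two comments on the iteration route. First, you are quietly invoking that the truncated preordering $\richquadmodule{rn}$ is a \emph{closed} cone in $\R[\x]_{rn+1}$, which is true for $\cube{n}$ (compact with nonempty interior) but is a nontrivial structural fact that must be cited, and it is precisely what the paper's direct inversion avoids. Since here the operator's invertibility is immediate, the iteration buys nothing and introduces an extra dependency; it would be the right tool only if $\Kernel_r$ were not known to be invertible or its inverse were hard to control. Second, your proposed ``$y$-parametric refinement of Markov--Luk\'acs'' for the Jackson kernel --- writing $\jackpoly_r(x,y)=\sigma_0(x,y)+(1-x^2)\sigma_1(x,y)$ with $\sigma_\alpha(x,y)=\sum_j\gamma_{\alpha,j}(y)h_{\alpha,j}(x)^2$ and $\gamma_{\alpha,j}\geq 0$ on $[-1,1]$ --- is stronger than what you need and is not established; the fallback you mention, a Gauss--Chebyshev quadrature of order $2r$ applied to $\int K_r(\x,\y)g(\y)\,d\mu(\y)$ combined with ordinary Markov--Luk\'acs on each fixed $K_r(\cdot,\y_i)$, is exactly the paper's Lemma in Section~\ref{SEC:p1}, is elementary, and should simply be used directly. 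With those adjustments your argument does yield the stated bound with essentially the paper's constant $C(n,d)$.

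Also a small normalization slip: the Jackson kernel should read $\jackpoly_r(x,y)=1+2\sum_{k=1}^r\jacksoncoef{k}{r}\Cheby{k}(x)\Cheby{k}(y)$; without the factor $2$ the orthogonality relations $\Chebyinner{T_k}{T_k}=\tfrac12$ ($k\neq 0$) give eigenvalues $\jacksoncoef{k}{r}/2$ rather than $\jacksoncoef{k}{r}$, which would break the estimate $|1-\lambda_k|=O(k^2/r^2)$ at $k=1$.
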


\begin{corollary}
Let $f\in \positivecone{n}$ with degree $d$.
Then, for any $\eta>0$, we have:
$$f+\eta \in \richquadmodule{{(r+1)}n} \quad \text{ for all  } r \ge \max\Big\{ \pi d \sqrt{2n},{1\over \sqrt \eta} \sqrt{C(n,d) (\funcmax - \funcmin)}\Big\},$$
where $C(n,d)$ is the constant from Theorem \ref{THM:main}.
Hence we have $f+\eta\in \richquadmodule{r}$ for $r=O(1/\sqrt \eta)$.
\end{corollary}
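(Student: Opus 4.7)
The plan is to derive the corollary directly from Theorem~\ref{THM:main} by translating the convergence rate for $\richlowbound{rn}$ into a membership statement in $\richquadmodule{rn}$. Recall from the definition \eqref{eqlasr} that $f + \eta \in \richquadmodule{rn}$ is equivalent to $-\eta$ being feasible in the optimization defining $\richlowbound{rn}$, and hence to the inequality $\richlowbound{rn} \geq -\eta$. Since $f \in \positivecone{n}$, we have $\funcmin \geq 0$, so this latter inequality is implied by $\funcmin - \richlowbound{rn} \leq \eta$.

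The key step is then to apply Theorem~\ref{THM:main}: for every $r \geq \pi d \sqrt{2n}$, we have
\[
    \funcmin - \richlowbound{rn} \;\leq\; \frac{C(n, d)}{r^2} \cdot (\funcmax - \funcmin).
\]
To guarantee that the right-hand side is at most $\eta$, it suffices to choose $r$ so that $r^2 \geq C(n,d)(\funcmax - \funcmin)/\eta$, i.e. $r \geq \sqrt{C(n,d)(\funcmax - \funcmin)}/\sqrt{\eta}$. Combining this with the standing restriction $r \geq \pi d \sqrt{2n}$ coming from Theorem~\ref{THM:main}, one arrives precisely at the lower bound on $r$ stated in the corollary, yielding $f + \eta \in \richquadmodule{rn}$.

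Finally, for the last assertion, note that once $f$ is fixed the quantities $n$, $d$, $C(n,d)$, and $\funcmax - \funcmin$ are constants, so the required value of $r$ grows like $O(1/\sqrt{\eta})$ as $\eta \to 0$. Absorbing the factor $n$ from the index $rn$ into the big-$O$ then gives the final claim $f + \eta \in \richquadmodule{r}$ for $r = O(1/\sqrt{\eta})$. There is no substantive obstacle here; the corollary is essentially a bookkeeping reformulation of Theorem~\ref{THM:main} using the duality between the degree bound in Schm\"udgen's Positivstellensatz and the convergence rate of the Lasserre hierarchy.
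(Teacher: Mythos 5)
Your proposal is correct and follows essentially the same route as the paper: use Theorem~\ref{THM:main} to get $\funcmin - \richlowbound{rn} \le \eta$ for the stated $r$, combine with $\funcmin \ge 0$ to get $\richlowbound{rn} \ge -\eta$, and invoke attainment of the maximum in~\eqref{eqlasr} (together with the fact that nonnegative constants lie in $\richquadmodule{rn}$) to conclude $f+\eta\in\richquadmodule{rn}$. The paper just makes this last step explicit by writing $f+\eta$ as $(f-\richlowbound{rn})$ plus three nonnegative constants, whereas you phrase it as an equivalence between membership and the inequality $\richlowbound{rn}\ge -\eta$; the content is the same.
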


\begin{proof}
Let $\eta>0$ and set $C_f:= C(n,d) \cdot (\funcmax - \funcmin)$.
 Pick an integer $r \ge \max\{ \pi d \sqrt{2n}, \sqrt{C_f/\eta}\}$.
Then we have:
$$f+\eta= \underbrace{f- \richlowbound{{(r+1)}n}}_{\in \richquadmodule{{(r+1)}n}} + \big(\underbrace{\richlowbound{{(r+1)}n}-\funcmin +{C_f\over r^2}}_{\ge 0 \text{ by Theorem \ref{THM:main}}}\big)
+\underbrace{ \funcmin}_{\ge 0 } + \big( \underbrace{\eta - {C_f\over r^2}}_{\ge 0}\big),
$$
which shows $f+\eta\in \richquadmodule{{(r+1)}n}$.
\end{proof}

\subsection{Outline of the proof}
\label{SEC:outline}
Let $f \in \R[\x]$ be a polynomial of degree $d$. 
To simplify our arguments and notation, we will work with the scaled function:
\[
    \scaledfunc := \frac{\func - \funcmin}{\funcmax - \funcmin},
\]
for which $\scaledfuncmin = 0$ and $\scaledfuncmax = 1$. Since the inequality (\ref{eq:main}) is invariant under a positive scaling of $f$ and adding a constant, it indeed suffices to show the result for the function $F$.

The idea of the proof is as follows. Let $\epsilon > 0$ and consider the polynomial $\tilde \scaledfunc := \scaledfunc + \epsilon$. {Let $r\ge d$}. Suppose that we are able to construct a (nonsingular) linear operator $\Kernel_r : \polys{\x}_r \rightarrow \polys{\x}_r$
which has the following two properties:
\begin{align}
	\label{PROPERTY:incone}
	&\Kernel_r p \in \richquadmodule{{(r+1)}n} \quad \text{ for all } p \in \positivecone{n}_{ r}, \tag{P1} \\
	\label{PROPERTY:infnorm}
    &\infnorm{\Kernel_r^{-1} \tilde \scaledfunc - \tilde \scaledfunc} := \max_{\x \in \cube{n}} |\Kernel_r^{-1} \tilde \scaledfunc(\x) - \tilde \scaledfunc(\x)| \leq \epsilon \tag{P2}.
\end{align}
Then, by \eqref{PROPERTY:infnorm}, we have $\Kernel_r^{-1} \tilde \scaledfunc \in \positivecone{n}_{ r}$. Indeed, as $F$ is nonnegative on $\cube{n}$,  $\tilde F(\x) = F(\x) + \epsilon$ is greater than or equal to $\epsilon$ for all $\x \in \cube{n}$, and so \eqref{PROPERTY:infnorm} tells us that after application of the operator $\Kernel_r^{-1}$, the resulting polynomial $\Kernel_r^{-1} \tilde F$ is nonnegative on $\cube{n}$. Using \eqref{PROPERTY:incone}, we may then conclude that $\tilde \scaledfunc = \Kernel_r (\Kernel_r^{-1} \tilde \scaledfunc) \in \richquadmodule{{(r+1)}n}$. 
It follows that 
$-\epsilon \le F_{({(r+1)}n)}$, i.e., $F_{\min}-F_{({(r+1)}n)}  \le \epsilon$, and thus $\funcmin - \richlowbound{{(r+1)}n} \leq \epsilon 
\cdot (\funcmax - \funcmin)$. We collect this in the next lemma for future reference.

\begin{lemma}
\label{LEM:outline}
Assume that for some $r \ge d$ and $\epsilon > 0$ 
there exists a nonsingular operator $\Kernel_r : \polys{\x}_r \rightarrow \polys{\x}_r$ which satisfies the properties \eqref{PROPERTY:incone} and \eqref{PROPERTY:infnorm}. Then we have
\[
	\funcmin - \richlowbound{{(r+1)}n} \leq \epsilon \cdot (\funcmax - \funcmin).
\]
\end{lemma}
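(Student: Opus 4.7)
The plan is to formalize the informal argument given immediately before the lemma; no new ingredient is needed beyond careful bookkeeping. First, I reduce to the scaled function $\scaledfunc=(\func-\funcmin)/(\funcmax-\funcmin)$, for which $\scaledfuncmin=0$ and $\scaledfuncmax=1$. Since $\richquadmodule{rn}$ is a convex cone closed under multiplication by positive scalars, one checks from the definition \eqref{eqlasr} that $\scaledrichlowbound{rn} = (\richlowbound{rn}-\funcmin)/(\funcmax-\funcmin)$, whence
\[
\scaledfuncmin - \scaledrichlowbound{rn} \;=\; \frac{\funcmin - \richlowbound{rn}}{\funcmax - \funcmin}.
\]
It is therefore enough to establish $\scaledfuncmin - \scaledrichlowbound{rn}\le \epsilon$, i.e.\ the lemma with $\scaledfunc$ in place of $\func$.

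Next, set $\tilde \scaledfunc := \scaledfunc + \epsilon$; since $\scaledfunc \ge 0$ on $\cube n$, we have $\tilde\scaledfunc(\x)\ge \epsilon$ for all $\x\in\cube n$. Because $\deg\tilde\scaledfunc \le d\le r$ and $\Kernel_r$ is invertible on $\polys{\x}_r$, the polynomial $g:=\Kernel_r^{-1}\tilde\scaledfunc$ is a well-defined element of $\polys{\x}_r$. Applying (P2) pointwise gives $|g(\x)-\tilde\scaledfunc(\x)|\le \epsilon$ for every $\x\in\cube n$, so
\[
g(\x) \;\ge\; \tilde\scaledfunc(\x) - \epsilon \;\ge\; 0 \qquad \text{for all } \x\in\cube n,
\]
i.e., $g\in\positivecone n$.

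Invoking property (P1) on $g$ then yields $\Kernel_r g \in \richquadmodule{rn}$. But $\Kernel_r g = \tilde\scaledfunc = \scaledfunc -(-\epsilon)$, so $-\epsilon$ is feasible in the program \eqref{eqlasr} defining $\scaledrichlowbound{rn}$; hence $\scaledrichlowbound{rn}\ge -\epsilon$, which gives $\scaledfuncmin - \scaledrichlowbound{rn}\le \epsilon$. Rescaling back to $\func$ via the identity displayed above produces the claimed bound.

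The only delicate point I anticipate is degree bookkeeping when invoking (P1): as stated, (P1) applies to $p\in \positivecone n_d$, but a priori $g=\Kernel_r^{-1}\tilde\scaledfunc$ could have degree as large as $r$. I expect this to be handled by the explicit construction of $\Kernel_r$ later in the paper---namely, by verifying that $\Kernel_r$ preserves the subspace $\polys{\x}_d$, so that $g$ inherits degree at most $d$ from $\tilde\scaledfunc$---or equivalently by strengthening (P1) to cover every $p\in\positivecone n \cap \polys{\x}_r$. In either case, the argument above goes through verbatim.
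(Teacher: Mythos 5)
Your argument is correct and coincides with the paper's own (the lemma is proved by exactly the informal argument in Section \ref{SEC:outline}, which you have formalized faithfully, including the straightforward rescaling from $f$ to $F$). The degree-bookkeeping subtlety you flag is real: (P2) alone only gives nonnegativity of $\Kernel_r^{-1}\tilde F$ on $\cube{n}$, not that it lies in $\positivecone{n}_d$, and the paper's assertion ``$\Kernel_r^{-1}\tilde F\in\positivecone{n}_d$'' implicitly uses that the concrete $\Kernel_r$ (being diagonal in the Chebyshev basis, Lemma \ref{LEM:KernelDiag}) preserves each graded piece $\R[\x]_k$. Your alternative fix is also available in the text as written: the proof of (P1) in Section \ref{SEC:p1} actually establishes $\Kernel_r p\in\richquadmodule{rn}$ for all $p\in\positivecone{n}_r$, not merely degree $d$, so replacing $\positivecone{n}_d$ by $\positivecone{n}_r$ in (P1) removes the issue without invoking degree preservation.
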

In what follows, we will construct such an operator $\Kernel_r$ for each $r \geq \pi d \sqrt{2n}$ and the parameter $\epsilon := C(n,d) / r^2$, where the constant $C(n,d)$ will be specified later. Our main Theorem \ref{THM:main} then follows after applying Lemma \ref{LEM:outline}. 

We make use of the \emph{polynomial kernel method} for our construction: after choosing a suitable kernel ${K_r:\R^n\times\R^n\to\R}$, we define the linear operator ${\Kernel_r:\R[\x]_r\to\R[\x]_r}$ via the integral transform:
\[
	\Kernel_r p(\x) := \int_{\cube{n}} K_r(\x, \y) p(\y) d\mu(\y) \quad (p \in \R[\x]_r).
\]
Here, $\mu$ is the \emph{Chebyshev measure}  on $\cube{n}$ as defined in \eqref{EQ:chebymeasure} below.
A good choice for the kernel $K_r$ is a multivariate version (see Section \ref{SEC:construction}) of the well-known \emph{Jackson kernel} $\jackpoly_r$ of degree $r$ (see Section \ref{SEC:jackson}). For this choice of kernel, the operator $\Kernel_r$ naturally satisfies \eqref{PROPERTY:incone} (see Section \ref{SEC:p1}). Furthermore, it diagonalizes with respect to the basis of $\R[\x]$ given by the (multivariate) \emph{Chebyshev polynomials} (see Section \ref{SEC:cheby}). Property \eqref{PROPERTY:infnorm} can then be verified by analyzing the eigenvalues of $\Kernel_r$, which are closely related to the expansion of $\jackpoly_r$ in the basis of (univariate) Chebyshev polynomials (see Section \ref{SEC:p2}). We end this section by illustrating our method of proof with a small example.

\begin{example}
\label{EX:1}
Consider the polynomial $f(x) = 1 - x^2 - x^3 + x^4$, which is nonnegative on $[-1,1]$. For $r \in \N$, let $\Kernel_r$ be the operator associated to the univariate Jackson kernel \eqref{EQ:univariatejackpoly} of degree $r$, which satisfies \eqref{PROPERTY:incone} (see Section \ref{SEC:p1}). For $\eta = 0.1$, we observe that applying $\Kernel_7^{-1}$ to $f + \eta$ yields a nonnegative function on $[-1, 1]$, whereas applying $\Kernel_5^{-1}$ does not (see Figure \ref{FIG:example}). Applying the arguments of Section \ref{SEC:outline}, we may thus conclude that $f + \eta \in \richquadmodule{{8}}$, but not that  $f + \eta \in \richquadmodule{{6}}$.
\end{example}
\begin{figure}
\centering
\begin{tikzpicture}
    \begin{axis}[
    xmin = -1, xmax = 1,
    ymin = -0.5, ymax = 3.0, grid = both,
    xtick = {-1, -0.5, 0, 0.5, 1},
    width=0.9\textwidth,
    height=18em,
    legend style={font=\small}
    ]
        \addplot[domain = -1:1, samples = 200, smooth, ultra thick] 
        {1 + 0*x -1*x^2 -1*x^3 + 1*x^4 + 0.1};
        \addplot[domain = -1:1, samples = 200, smooth,  ultra thick, orange, dashed] 
        {1.51985 + 0.968875*x -5.15883*x^2 -2.40175*x^3 + 5.15883*x^4 + 0.1};
        \addplot[domain = -1:1, samples = 200, smooth, ultra thick, red, dashdotted] 
        {1.18978 + 0.456657*x -2.5182*x^2 -1.67305*x^3 + 2.5182*x^4 + 0.1};
		\legend{$f(x) + \eta$,$\Kernel_5^{-1}\big(f(x) + \eta\big)$,$\Kernel_7^{-1}\big(f(x) + \eta\big)$}
    \end{axis}
\end{tikzpicture}
\captionsetup{width=0.8\textwidth, font=small}
\caption{The polynomial $f(x) + \eta$ of Example \ref{EX:1} and its transformations under the inverse operators $\Kernel_5^{-1}$ and $\Kernel_7^{-1}$ associated to the Jackson kernels of degree $5$ and $7$.}
\label{FIG:example}
\end{figure}
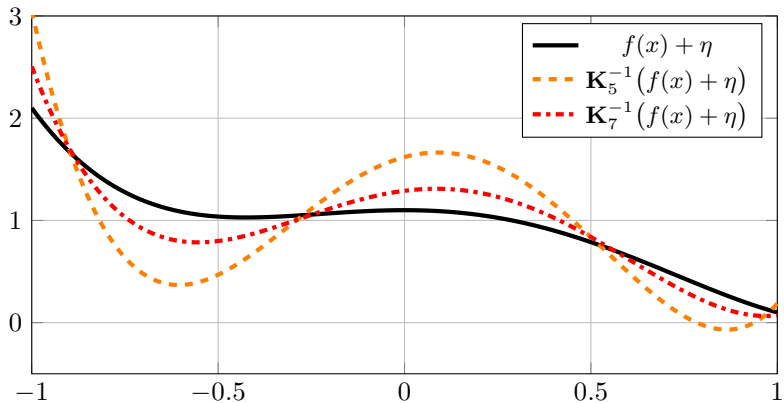

\subsection{Related work}
The polynomial kernel method, which forms the basis of our analysis, is widely used in functional approximation, see, e.g., \cite{KernelPolynomialMethodSurvey}. In the present context, the method has already been employed for the analysis of the sum-of-squares hierarchy for optimization over the hypersphere $S^{n-1}$ in \cite{FangFawzi2021} (where a rate in $O(1/r^2)$ was shown as well)  and for optimization over the \emph{binary} cube $\{-1, 1\}^n$ in \cite{SlotLaurent2020b}. There, the authors use kernels that are invariant under the symmetry of $S^{n-1}$ and $\{-1, 1\}^n$, respectively.

 In \cite{HessdeKlerkLaurent2017}, the polynomial kernel method, and the Jackson kernel in particular, were used to analyze the quality of a related Lasserre-type hierarchy of \emph{upper} bounds on $\funcmin$ over $\cube{n}=[-1, 1]^n$, where one searches for 
 a density {in the truncated preordering $\richquadmodule{r}$ }
  minimizing the expected value of $f$ over $\cube{n}$ (showing again a convergence rate in $O(1/r^2)$).

For a general compact semialgebraic set $S$, a polynomial $f$ nonnegative on $S$ and $\eta>0$, existence of Schm\"udgen-type certificates of positivity for $f + \eta$ with degree bounds in $O(1/\eta^c)$ was shown in \cite{Schweighofer2004}, 
 where $c > 0$ is a constant depending on $S$. 
This result uses different tools, including in particular a representation result for polynomial optimization over the simplex by P\'olya \cite{Polya} and the effective degree bounds by Powers and Reznick \cite{PR}.

For the case of the hypercube\footnote{The hypercube $[0,1]^n$ is considered in \cite{deKlerkLaurent2010} but the results extend to the hypercube $[-1,1]^n$ by an affine change of variables.} a degree bound  in $O(1/\eta)$ for Schm\"udgen-type certificates is obtained in \cite{deKlerkLaurent2010}, thus showing that one can take $c\le 1$ in the above mentioned result of \cite{Schweighofer2004}.
This result holds in fact for a weaker hierarchy of bounds  obtained by restricting in (\ref{EQ:lasr}) to decompositions of the polynomial $f-\lambda$ involving factors $\sigma_J$ that are nonnegative scalars (instead of sums of squares), also known as Handelman-type decompositions (thus replacing the preordering $ \richquadmodule{r}$ by its subset $H_r$ of  polynomials having a Handelman-type decomposition).
The analysis in \cite{deKlerkLaurent2010} relies on employing the \emph{Bernstein operator} $\mathbf{B}_r$, which has the property of mapping a polynomial  nonnegative over the hypercube to a polynomial in the set $H_{rn}\subseteq  \richquadmodule{rn}$.

In this paper, we can show a further improvement by using a different type of kernel operator; namely we show that we can take the constant  $c \leq  1/2$ in the special case $S = [-1, 1]^n$. 

\section{Preliminaries}
\subsection{Notation}
Throughout, $\cube{n} := [-1, 1]^n \subseteq \R^n$ is the $n$-dimensional hypercube. We write $\R[x]$ for the univariate polynomial ring, while reserving the bold-face notation $\R[\x] = \R[x_1, x_2, \dots, x_n]$ to denote the ring of polynomials in $n$ variables. Similarly, $\Sigma[x] \subseteq \R[x]$ and $\Sigma[\x] \subseteq \R[\x]$ denote the sets of univariate and $n$-variate sum-of-squares polynomials, respectively, consisting of all polynomials of the form $p = p_1^2 + p_2^2 + \dots + p_m^2$ for certain polynomials $p_1,\ldots,p_m$ and $m\in \N$. For a polynomial $p \in \R[\x]$, we write $p_{\min}, p_{\max}$ for its minimum and maximum over $\cube{n}$, respectively, and $\infnorm{p} := \sup_{\x \in \cube{n}} |p(\x)|$ for its sup-norm on $\cube{n}$.
\subsection{Chebyshev polynomials}
\label{SEC:cheby}
Let $\mu$ be the normalized \emph{Chebyshev measure} on $\cube{n} = [-1, 1]^n$, defined by:
\begin{equation}
\label{EQ:chebymeasure}
d\mu(\x) = \frac{dx_1}{\pi \sqrt{1 - x_1^2}} \ldots \frac{dx_n}{\pi \sqrt{1 - x_n^2}}.
\end{equation}
Note that $\mu$ is a probability measure on $\cube{n}$, meaning that $\int_{\cube{n}}d\mu = 1$.
 We write $\Chebyinner{\cdot}{\cdot}$ for the corresponding inner product on $\R[\x]$, given by:
\[
	\Chebyinner{f}{g} := \int_{\cube{n}} f(\x) g(\x) d\mu(\x).
\]
For $k \in \N$, let $\Cheby{k}$ be the univariate \emph{Chebyshev polynomial} (see, e.g., \cite{Szego1959}) of degree $k$, defined by:
\[
	\Cheby{k}(\cos \theta) := \cos (k\theta) \quad (\theta \in \R).
\]
Note that $|T_k(x)| \leq 1$ for all $x \in [-1, 1]$ and that $T_0 = 1$.
The Chebyshev polynomials satisfy the orthogonality relations:
\begin{equation}
\label{EQ:orthorelations}
\Chebyinner{T_a}{T_b} = \int_{-1}^{1} \Cheby{a}(x) \Cheby{b}(x) d\mu(x) = 
\begin{cases} 0 \quad a \neq b, \\ 1 \quad a=b=0, \\ \frac{1}{2} \quad a=b\neq 0. \end{cases}
\end{equation} A univariate polynomial $p$ may therefore be expanded as:
\[
p = p_0 + \sum_{k = 1}^{\deg(p)} 2 p_k T_k, \quad \text{where } p_k := \Chebyinner{T_k}{p}.
\]
For $\kappa \in \N^n$, we consider the \emph{multivariate} Chebyshev polynomial $\Cheby{\kappa}$, defined by setting:
\[
\Cheby{\kappa}(\x) := \prod_{i = 1}^n \Cheby{\kappa_i}(x_i).
\]
The multivariate Chebyshev polynomials form a basis for $\R[\x]$ and satisfy the orthogonality relations:
\begin{equation}
\label{EQ:orthorelationsmulti}
\Chebyinner{T_\alpha}{T_\beta} = \int_{\cube{n}} \Cheby{\alpha}(\x) \Cheby{\beta}(\x) d\mu(\x)  = \begin{cases} 0 \quad &\alpha \neq \beta, \\ 1 \quad &\alpha=\beta=0, \\ 2^{-w(\alpha)} \quad &\alpha=\beta\neq 0. \end{cases}
\end{equation}
Here, $w(\alpha) := |\{i \in [n] : \alpha_i \neq 0 \}|$ denotes the Hamming weight of $\alpha \in \N^n$.

We use the notation $\N^n_{d} \subseteq \N^n$ to denote the set of $n$-tuples $\alpha\in \N^n$ with $|\alpha|=\sum_{i=1}^n\alpha_i\le d$.
As in the univariate case, we may expand any $n$-variate polynomial $p$ as:
\begin{equation}
\label{EQ:multiChebyexp}
p = \sum_{\kappa \in \N^n_{\deg(p)}} 2^{w(\kappa)} p_\kappa T_\kappa, \quad\text{where } p_\kappa := \Chebyinner{T_\kappa}{p}.
\end{equation}

\subsection{The Jackson kernel}
\label{SEC:jackson}
For $r\in \N$ and for coefficients $\jacksoncoef{k}{r} \in \R$ to be specified below in (\ref{EQ:jacksoncoef}), consider the kernel $\jackpoly_r : \R \times \R \to \R$ given by:
\begin{equation}
    \label{EQ:univariatejackpoly}
    \jackpoly_r(x, y) := 1 + 2 \sum_{k=1}^r \jacksoncoef{k}{r} \Cheby{k}(x) \Cheby{k}(y).
\end{equation}
We associate a linear operator $\jackkernel_r : \polys{x}_r \to \polys{x}_r$ to this kernel by setting:
\[
\jackkernel_r p(x) := \int_{-1}^{1} \jackpoly_r(x, y) p(y) d\mu(y) \quad (p \in \R[x]_r).
\]
Using the orthogonality relations \eqref{EQ:orthorelations}, and writing $\lambda_0^r :=1$, we see that:
\[
\jackkernel_r \Cheby{k}(x) := \int_{-1}^1 \jackpoly_r(x, y) \Cheby{k}(y) d\mu(y) = \lambda^r_k \Cheby{k}(x) \quad (0 \leq k \leq r).
\]
In other words, $\jackkernel_r$ is a diagonal operator with respect to the Chebyshev basis of $\polys{x}_r$, and its eigenvalues are given by $\lambda^r_0 = 1, \jacksoncoef{1}{r}, \dots, \jacksoncoef{r}{r}$. 
In what follows, we set:
\begin{equation}
    \label{EQ:jacksoncoef}
    \jacksoncoef{k}{r} = \frac{1}{r+2}\big((r+2-k) \cos(k \theta_r) + \frac{\sin(k\theta_r)}{\sin(\theta_r)} \cos(\theta_r) \big) \quad (1 \leq k \leq r),
\end{equation}
with $\theta_r = \frac{\pi}{r+2}$. We then obtain the so-called \emph{Jackson kernel} (see, e.g., \cite{KernelPolynomialMethodSurvey}). The following properties of the Jackson kernel are crucial to our analysis.
\begin{proposition}
\label{PROP:jacksoncoefficients}
For every $d, r \in \N$ with $d \leq r$, we have:
\begin{enumerate}
	\item[(i)] $\jackpoly_r(x, y) \geq 0$ for all $x, y \in [-1, 1]$,
	\item[(ii)] $1 \geq \jacksoncoef{k}{r} > 0$ for all $0 \leq k \leq r$,  and
    \item[(iii)]  $|1 - \jacksoncoef{k}{r}| = 1 - \jacksoncoef{k}{r} \leq {\pi^2 d^2\over (r+2)^2}$ for all $0 \leq k \leq d$.
\end{enumerate}
\end{proposition}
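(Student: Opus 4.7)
All three claims follow from the explicit formula (8) combined with a closed-form trigonometric representation of $\jackpoly_r$, both classical in approximation theory.

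For part (i), I would substitute $x=\cos\phi$, $y=\cos\psi$ and use the identity $2\Cheby{k}(\cos\phi)\Cheby{k}(\cos\psi) = \cos(k(\phi-\psi))+\cos(k(\phi+\psi))$ to write
$$\jackpoly_r(\cos\phi,\cos\psi) = \tfrac{1}{2}\bigl[\widetilde J_r(\phi-\psi)+\widetilde J_r(\phi+\psi)\bigr],$$
where $\widetilde J_r(\theta) := 1 + 2\sum_{k=1}^r \jacksoncoef{k}{r} \cos(k\theta)$ is a univariate cosine polynomial. With the coefficients (8), $\widetilde J_r$ is the classical Jackson kernel on the torus, which admits a nonnegative closed form as (a normalization of) a squared Fej\'er-type kernel; I would verify this representation by directly matching cosine coefficients. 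Nonnegativity of $\jackpoly_r$ on $[-1,1]^2$ then follows.

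For part (ii), the upper bound $\jacksoncoef{k}{r}\leq 1$ is a short consequence of (i) and the eigenvalue relation. Evaluating $\jackkernel_r \Cheby{k}(x) = \jacksoncoef{k}{r}\Cheby{k}(x)$ at $x=1$ (where $\Cheby{k}(1)=1$) gives
$$\jacksoncoef{k}{r} = \int_{-1}^1 \jackpoly_r(1,y)\,\Cheby{k}(y)\,d\mu(y),$$
and so $|\jacksoncoef{k}{r}|\leq \int_{-1}^1 \jackpoly_r(1,y)\,d\mu(y) = \jacksoncoef{0}{r}=1$ using $\jackpoly_r\geq 0$ and $|\Cheby{k}|\leq 1$. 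Strict positivity $\jacksoncoef{k}{r}>0$ for $0\leq k\leq r$ I would obtain by rewriting (8) through product-to-sum identities into a manifestly positive trigonometric expression, exploiting $0 < k\theta_r \leq r\theta_r < \pi$.

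For part (iii), the plan is a direct Taylor expansion of (8). Splitting
$$(r+2)(1-\jacksoncoef{k}{r}) = (r+2)\bigl(1-\cos(k\theta_r)\bigr) + \bigl(k\cos(k\theta_r)-\tfrac{\sin(k\theta_r)\cos\theta_r}{\sin\theta_r}\bigr),$$
the first term is $\leq (r+2)(k\theta_r)^2/2 = \pi^2 k^2/(2(r+2))$ via $1-\cos x\leq x^2/2$. For the second term, writing $k\cos(k\theta_r)$ as the sum of $k$ copies of $\cos(k\theta_r)$ and using the Chebyshev-$U$ identity $\sin(k\theta_r)/\sin\theta_r = \sum_{j=0}^{k-1}\cos((k-1-2j)\theta_r)$, I would bound each summand via $|\cos\alpha-\cos\beta|\leq|\alpha^2-\beta^2|/2$, yielding a total of order $k(k^2-1)\theta_r^2/3$. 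Dividing by $r+2$ and using $k\leq d\leq r$ gives $1-\jacksoncoef{k}{r} \leq (5/6)\pi^2 d^2/(r+2)^2 < \pi^2 d^2/(r+2)^2$. The main technical obstacle is part (i): pinning down the squared-Fej\'er closed form and matching its normalization to the coefficients (8) takes careful bookkeeping.
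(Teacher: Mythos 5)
Your plan for part (iii) is sound and actually sharper than the paper's. The paper expands $1-\jacksoncoef{k}{r}$ directly and controls the trigonometric terms with the elementary Taylor bounds $\cos x\ge 1-x^2/2$ and $x-x^3/6\le \sin x\le x$, landing on $1-\jacksoncoef{k}{r}\le k^2\theta_r^2\le \pi^2d^2/(r+2)^2$ (plus a separate line for $k=1$). Your split $(r+2)(1-\jacksoncoef{k}{r})=(r+2)(1-\cos(k\theta_r))+\bigl(k\cos(k\theta_r)-\sin(k\theta_r)\cos\theta_r/\sin\theta_r\bigr)$, together with the observation that the second bracket is a combination $\sum_m c_m\bigl(\cos(k\theta_r)-\cos(m\theta_r)\bigr)$ with $\sum_m c_m=k-1$, does give $\le k(k^2-1)\theta_r^2/3$ for that piece and a final constant $\tfrac{5}{6}$ instead of $1$; the arithmetic using $k\le d\le r$ checks out. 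Your proof of the upper bound $\jacksoncoef{k}{r}\le 1$ in part (ii) is identical to the paper's. For part (i) the paper simply cites the literature, whereas you propose to reprove nonnegativity by matching $\jackpoly_r$ to a normalized squared Fej\'er kernel; this is the standard route and is fine, though you correctly flag it as the book-keeping-heavy step.

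The genuine gap is the strict positivity $\jacksoncoef{k}{r}>0$ in part (ii). You say you would ``rewrite (8) through product-to-sum identities into a manifestly positive trigonometric expression,'' but no such manifestly positive form falls out of the obvious manipulations. For $k>(r+2)/2$ one has $\cos(k\theta_r)<0$, so the first term of \eqref{EQ:jacksoncoef} is negative; multiplying through by $\sin\theta_r$ and using product-to-sum yields
\[
(r+2)\sin(\theta_r)\,\jacksoncoef{k}{r}
=\tfrac{r+3-k}{2}\,\sin\bigl((k+1)\theta_r\bigr)-\tfrac{r+1-k}{2}\,\sin\bigl((k-1)\theta_r\bigr),
\]
a difference of two nonnegative quantities, which is not obviously positive. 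The paper handles exactly this regime by substituting $h=r+2-k$ and running an induction on $h$ (starting from $h=2$) to show the reparametrized expression stays positive; some argument of this nature is needed and your sketch does not supply it. One way to rescue your proposal without the paper's induction would be to actually carry out the squared-Fej\'er computation you envisage for part (i): writing $\widetilde J_r$ as a normalized $|F|^2$ with $F$ a Fej\'er-type kernel whose cosine coefficients $(1-|j|/(r+2))$ are strictly positive, the coefficients $\jacksoncoef{k}{r}$ become autocorrelations of strictly positive numbers and positivity is immediate. But as written you treat (i) and the positivity in (ii) as separate tasks and the latter is left unresolved.
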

\begin{proof}
Nonnegativity of the Jackson kernel is a well-known fact, and is verified, e.g., in \cite{HessdeKlerkLaurent2017}. We check that the other properties (ii)-(iii) hold as well.

\smallskip\noindent
\textbf{Second property (ii):} Note that when $k \leq (r+2) / 2$, both terms of \eqref{EQ:jacksoncoef} are positive, and so certainly $\jacksoncoef{k}{r} > 0$. 
So assume $(r+2) / 2 < k \leq r$. Set $h = r+2 - k$, so that $k \theta_r = \pi - h \theta_r$, {$2\le h<(r+2)/2$},  and 
\begin{equation}
\label{EQ:jacksoncoefinh}
    (r+2) \jacksoncoef{k}{r} = - h \cos( h \theta_r) + \frac{\sin (h \theta_r)}{\sin (\theta_r)}\cos(\theta_r).
\end{equation}
It remains to show that the RHS of \eqref{EQ:jacksoncoefinh} is positive for all $2 \leq h < (r+2)/2$.
Note that $1 > \cos(\theta_r) > 0$, $\sin(\theta_r)  > 0$ and that $\sin(h \theta_r) \geq 0$ for all $2 \leq h < (r+2)/2$. We proceed by induction on $h$. For $h = 2$, we compute:
\begin{equation}
\label{EQ:jacksoncoefIH}
\begin{split}
    - h \cos( h \theta_r) + \frac{\sin (h \theta_r)}{\sin(\theta_r)}\cos(\theta_r) 
    &= -2 (2\cos^2(\theta_r) -1) + 2 \cos^2(\theta_r) \\
    &= - 2 \cos^2(\theta_r) + 2 > 0,
\end{split}
\end{equation}
{which settles the base of induction.} For $h \geq 2$, we compute:
\begin{align*}
    -(h+1) &\cos( (h+1) \theta_r) + \sin((h+1)\theta_r)\frac{\cos(\theta_r)}{\sin(\theta_r)} \\
    &= -(h+1) \big(\cos(h \theta_r)\cos(\theta_r) - \sin(h \theta_r)\sin(\theta_r) \big) \\
    &\quad + \big(\sin(h\theta_r)\cos(\theta_r) + \cos(h\theta_r) \sin(\theta_r)\big) \frac{\cos(\theta_r)}{\sin(\theta_r)} \\
    &= - h \cos (h \theta_r) \cos(\theta_r) + (h+1) \sin (h\theta_r)\sin(\theta_r) + \frac{\sin (h \theta_r)}{\sin(\theta_r)}\cos^2(\theta_r) \\
    & {= \underbrace{\cos(\theta_r)}_{>0} \underbrace{\Big( -h \cos(h\theta_r) + {\sin(h\theta_r)\over \sin(\theta_r)}\cos(\theta_r)\Big)}_{\ge 0 \text{\rm\ by the induction assumption}} + (h+1)\underbrace{\sin(h\theta_r)\sin(\theta_r)}_{\ge 0}}\\
& \ge 0.
\end{align*}
We conclude that $\jacksoncoef{k}{r} > 0$ for all $k \in [r]$. 
To see that $\jacksoncoef{k}{r} \leq 1$, note that for all $k \in \N$, $\Cheby{k}(x) \leq 1$ for $-1 \leq x \leq 1$ and $\Cheby{k}(1) = 1$. We can thus compute:
\[
	\jacksoncoef{k}{r} = \jacksoncoef{k}{r} \Cheby{k}(1) =  \int_{-1}^1 \jackpoly_r(1, y) \Cheby{k}(y) d\mu(y) \leq \int_{-1}^1 \jackpoly_r(1, y) d\mu(y) = \jacksoncoef{0}{r} = 1,
\]
making use of the nonnegativity of $\jackpoly_r(x, y)$ on $[-1,1]^2$ for the inequality.

\smallskip\noindent
\textbf{Third property (iii):}
Using the expression of $\lambda^k_r$ in (\ref{EQ:jacksoncoef}) we have
$$1-\lambda^k_r= 1- {r+2-k\over r+2}\cos(k\theta_r) - {1\over r+2}{\sin(k\theta_r)\cos(\theta_r)\over \sin(\theta_r)}.$$
We now bound each trigonometric term using the fact that:
\begin{align}\label{eqineq}
\cos(x)\ge 1-{1\over 2}x^2,\quad x-{1\over 6}x^3\le \sin(x)\le x\quad (x \in \R).\end{align}
{ When $k=1$ we immediately get:
$$1-\lambda^1_r= 1-\cos(\theta_r) \le {1\over 2}\theta_r^2= {\pi^2\over 2 (r+2)^2} \le {d^2\pi^2\over (r+2)^2}.
$$}
{Assume now $2\le k\le d$. Using (\ref{eqineq}) combined with $\cos(\theta_r), \sin(\theta_r), \sin (k\theta_r)>0$ we obtain:}
$$
{\sin(k\theta_r)\cos(\theta_r)\over \sin(\theta_r)}
 \ge \big ( k\theta_r-{1\over 6}k^3\theta_r^3\big) \big(1-{1\over 2}\theta_r^2\big){1\over \theta_r}
 \ge k -{k\over 2}\theta_r^2\big(1+{k^2\over 3}\big )
$$ and thus:
\begin{align*}
1-\lambda^k_r  & \le 1-{r+2-k\over r+2}\big(1-{k^2\theta_r^2\over 2}\big) 
-{1\over r+2} \Big(k -{k\over 2}\theta_r^2\big(1+{k^2\over 3}\big )\Big)\\
&=\underbrace{{r+2-k\over r+2}}_{\le 1} {k^2\theta_r^2\over 2}
+ \underbrace{{k\over 2(r+2)}}_{\le 1/2} \theta_r^2 \underbrace{\big(1+{k^2\over 3}\big)}_{\le {2\over 3}k^2 \text{ if } k\ge 2}\\
& \le k^2\theta_r^2 \le {d^2\pi^2\over (r+2)^2}.
\end{align*}
This concludes the proof if $k\ge 2$. 
\end{proof}

\section{Proof of the main theorem}
\subsection{Construction of the linear operator  $\Kernel_r$}
\label{SEC:construction}
As noted before, in order to prove Theorem \ref{THM:main} it suffices to construct a linear operator $\Kernel_r : \polys{\x}_r \to \polys{\x}_r$ that is nonsingular and satisfies \eqref{PROPERTY:incone} and \eqref{PROPERTY:infnorm}. For this purpose we define the multivariate Jackson kernel $\multijackpoly_r : \R^n \times \R^n \to \R$ by setting:
\begin{equation}
	\label{EQ:multijackpoly}
	\multijackpoly_r(\x, \y) := \prod_{i = 1}^n \jackpoly_r(x_i, y_i),
\end{equation}
where $\jackpoly_r$ is the (univariate) Jackson kernel from $\eqref{EQ:univariatejackpoly}$.
Now let $\Kernel_r$ be the corresponding kernel operator defined by:
\[
\Kernel_r p(\x) = \int_{\y \in \cube{n}} \multijackpoly_r(\x,\y) p(\y) d \mu(\y) \quad (p \in \R[\x]_r).
\]
The operator $\Kernel_r$ is diagonal w.r.t. the (multivariate) Chebyshev basis, and its eigenvalues can be expressed in terms of the coefficients $\jacksoncoef{k}{r}$ of the Jackson kernel, as the following lemma shows.
\begin{lemma}
\label{LEM:KernelDiag}
The operator $\Kernel_r$ is diagonal w.r.t. the Chebyshev basis for $\polys{\x}_r$, and its eigenvalues are given by: 
\[
\jacksoncoef{\kappa}{r} := \prod_{i = 1}^n \jacksoncoef{\kappa_i}{r} \quad (\kappa \in \N^n_r).
\]
\end{lemma}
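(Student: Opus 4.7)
The plan is to apply $\Kernel_r$ directly to a multivariate Chebyshev polynomial $\Cheby{\kappa}$ for $\kappa \in \N^n_r$ and show that the result is $\jacksoncoef{\kappa}{r} \Cheby{\kappa}$. The key observation is that both the kernel $\multijackpoly_r$ (by definition \eqref{EQ:multijackpoly}) and the Chebyshev measure $\mu$ (by definition \eqref{EQ:chebymeasure}) factor as products over the $n$ coordinate directions. This reduces the multivariate integral defining $\Kernel_r \Cheby{\kappa}$ to a product of $n$ univariate integrals, each of which has already been evaluated in Section \ref{SEC:jackson}.

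Concretely, I would fix $\kappa \in \N^n_r$ so that $\Cheby{\kappa} \in \polys{\x}_r$, and note that $\kappa_i \leq |\kappa| \leq r$ for every $i$, which ensures the univariate eigenvalue relation for $\jackkernel_r$ is applicable coordinate-wise. Expanding $\Cheby{\kappa}(\y) = \prod_i \Cheby{\kappa_i}(y_i)$ and $\multijackpoly_r(\x,\y) = \prod_i \jackpoly_r(x_i, y_i)$, and using Fubini's theorem together with the product structure of $\mu$, one obtains
\[
\Kernel_r \Cheby{\kappa}(\x) = \int_{\cube{n}} \prod_{i=1}^n \jackpoly_r(x_i, y_i) \Cheby{\kappa_i}(y_i) \, d\mu(\y) = \prod_{i=1}^n \int_{-1}^1 \jackpoly_r(x_i, y_i) \Cheby{\kappa_i}(y_i) \, d\mu(y_i).
\]
By the univariate identity $\jackkernel_r \Cheby{k} = \jacksoncoef{k}{r} \Cheby{k}$ recorded in Section \ref{SEC:jackson}, each factor equals $\jacksoncoef{\kappa_i}{r} \Cheby{\kappa_i}(x_i)$, so the product is $\prod_i \jacksoncoef{\kappa_i}{r} \Cheby{\kappa_i}(x_i) = \jacksoncoef{\kappa}{r} \Cheby{\kappa}(\x)$.

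Since the multivariate Chebyshev polynomials $\{\Cheby{\kappa} : \kappa \in \N^n_r\}$ form a basis of $\polys{\x}_r$ and $\Kernel_r$ is linear, this eigenvalue identity on basis elements establishes that $\Kernel_r$ is diagonal in the Chebyshev basis with the claimed eigenvalues. There is no real obstacle here: the result is essentially a bookkeeping exercise combining the tensor-product structure of $\multijackpoly_r$ and $\mu$ with the univariate eigenvalue computation already carried out. The only points that deserve explicit mention are the appeal to Fubini to separate the integral and the verification that every coordinate index $\kappa_i$ lies within the range where the univariate formula applies.
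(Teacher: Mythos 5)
Your proof is correct and follows essentially the same route as the paper: apply $\Kernel_r$ to a multivariate Chebyshev basis element, exploit the tensor-product structure of both the Jackson kernel and the Chebyshev measure to factor the integral into univariate pieces, and invoke the univariate eigenvalue relation $\jackkernel_r \Cheby{k} = \jacksoncoef{k}{r}\Cheby{k}$ from Section~\ref{SEC:jackson}. The extra remarks about Fubini and the index range $\kappa_i \le r$ are sound, if slightly more detail than the paper spells out.
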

\begin{proof}
For $\kappa \in \N^n_r$, we see that:
\begin{align*}
\Kernel_r \Cheby{\kappa}(\x) &= \int_{\y \in \cube{n}} \multijackpoly_r(\x,\y) \Cheby{\kappa}(\y) d \mu(\y) \\
&= \prod_{i = 1}^n \bigg(\int_{y_i \in [-1, 1]}  \jackpoly_r(x_i, y_i) \Cheby{\kappa_i}(y_i) d \mu(y_i)\bigg)
=\prod_{i = 1}^n \jacksoncoef{\kappa_i}{r} \Cheby{\kappa_i}(x_i) = \jacksoncoef{\kappa}{r} \Cheby{\kappa}(\x), 
\end{align*}
as required.
\end{proof} \noindent
It follows immediately from Proposition \ref{PROP:jacksoncoefficients}(ii) that $\Kernel_r$ has only nonzero eigenvalues and thus is non-singular. We show that $\Kernel_r$ further satisfies \eqref{PROPERTY:incone} and \eqref{PROPERTY:infnorm}.
\subsection{Verification of property \eqref{PROPERTY:incone}}
\label{SEC:p1}
Consider the following strengthening of \\ Schm\"udgen's Positivstellensatz in the univariate case.
\begin{theorem}[Fekete, Markov-Luk\'acz (see \cite{PR2})]\label{THM:MarkovLukacz}
Let $p$ be a univariate polynomial of degree $r$, and assume that $p \geq 0$ on the interval $[-1, 1]$. Then $p$ admits a representation of the form:
\begin{equation}
\label{EQ:Feketerepresentation}
p(x) = \sigma_0(x) + \sigma_1(x) (1-x^2),
\end{equation}
where $\sigma_0, \sigma_1 \in \sos{x}$ and $\sigma_0$ and  $\sigma_1 \cdot (1-x^2)$ are of degree at most $r+1$. In other words, in view of (\ref{eqmodule}), we have $p\in Q([-1, 1])_{{r+1}}$.
\end{theorem}

By Proposition \ref{PROP:jacksoncoefficients}(i), for any $y \in [-1, 1]$, the polynomial $x \mapsto \jackpoly_r(x, y)$ is nonnegative on $[-1,1]$ and thus, by Theorem \ref{THM:MarkovLukacz}, it 
belongs to $Q([-1, 1])_{{r+1}}$. 
This implies directly that the multivariate polynomial $\x \mapsto \multijackpoly_r(\x, \y) = \prod_{i=1}^n \jackpoly_r(x_i, y_i)$ belongs to $\richquadmodule{{(r+1)}n}$ for all $\y \in [-1, 1]^n$.
\begin{lemma}
The operator $\Kernel_r$ satisfies property \eqref{PROPERTY:incone}, that is, we have $\Kernel_r p \in \richquadmodule{{(r+1)}n}$ for all $p \in \positivecone{n}_{{r}}$.
\end{lemma}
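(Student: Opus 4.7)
The plan is to discretize the integral defining $\Kernel_r p$ via a cubature rule, converting it into a finite conic combination of kernel slices $\multijackpoly_r(\cdot, \y^{(i)})$, each of which already lies in $\richquadmodule{rn}$ by the discussion preceding the lemma. Once this reduction is in hand the conclusion follows from the fact that $\richquadmodule{rn}$ is a convex cone: it is closed under addition and under multiplication by a nonnegative scalar, because a nonnegative scalar times a sum of squares is again a sum of squares (write $c=(\sqrt{c})^2$ and distribute over the square factors).

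For the discretization I would invoke the $n$-fold product Gauss--Chebyshev quadrature rule. The univariate Gauss--Chebyshev rule with $N$ nodes is exact on polynomials of degree at most $2N-1$ and has strictly positive weights, which is a general property of Gauss quadrature against any positive measure. Its $n$-fold tensor product yields $M=N^n$ nodes $\y^{(1)}, \ldots, \y^{(M)} \in \cube{n}$ and weights $w_1, \ldots, w_M > 0$ satisfying
\[
    \int_{\cube{n}} q(\y) \, d\mu(\y) \;=\; \sum_{i=1}^{M} w_i \, q(\y^{(i)})
\]
for every $q \in \R[\y]$ whose degree in each variable $y_j$ is at most $2N-1$. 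The integrand $\y \mapsto \multijackpoly_r(\x, \y) p(\y)$ has degree at most $r+d$ in each $y_j$ (with contribution $r$ from the factor $\jackpoly_r(x_j, y_j)$ of the kernel and $d$ from $p$), so choosing $N = \lceil (r+d+1)/2 \rceil$ makes the rule exact for it, giving
\[
    \Kernel_r p(\x) \;=\; \sum_{i=1}^{M} w_i \, p(\y^{(i)}) \, \multijackpoly_r(\x, \y^{(i)}).
\]

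Each coefficient $w_i \, p(\y^{(i)})$ is nonnegative, since $w_i>0$ and $p\ge 0$ on $\cube{n} \ni \y^{(i)}$, and each polynomial $\multijackpoly_r(\cdot, \y^{(i)})$ lies in $\richquadmodule{rn}$. Hence $\Kernel_r p$ is a conic combination of elements of $\richquadmodule{rn}$ and so belongs to this cone. I do not foresee a substantive obstacle: the cubature trick cleanly sidesteps the delicate question of whether a family of preordering decompositions of $\multijackpoly_r(\cdot, \y)$ parametrized by $\y$ can be integrated against $p(\y) d\mu(\y)$ in an SOS-preserving way, which would otherwise require producing the SOS coefficients as bivariate polynomials in $(\x,\y)$ with some global structure.
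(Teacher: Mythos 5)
Your proof is correct and takes essentially the same approach as the paper: both discretize the integral transform via a positive-weight quadrature rule exact to the relevant degree, so that $\Kernel_r p$ becomes a conic combination of slices $\multijackpoly_r(\cdot,\y^{(i)})$, each already known to lie in $\richquadmodule{rn}$. The paper simply invokes the existence of such a rule without naming it; you make the harmless extra choice of the tensor Gauss--Chebyshev rule and spell out the degree count, which adds no new idea but is a valid instantiation.
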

\begin{proof}
One way to see this is as follows. Let $\{\y_i: i\in [N]\} \subseteq \cube{n}$ and $w_i>0$ ($i\in [N]$) form a quadrature rule for  integration of degree $2r$ polynomials over $\cube{n}$; that is, $\int_{\cube{n}} p(\x)d\mu(\x)=\sum_{i=1}^N w_ip(\y_i)$ for any $p\in \R[\x]_{2r}$.
Then, for any $p\in  \positivecone{n}_r$, we have
$\Kernel_r p(\x)=\sum_{i=1}^N K_r(\x,\y_i)p(\y_i) w_i$ with $p(\y_i)w_i\ge 0$ for all $i$, which shows that $\Kernel_r p\in 
\richquadmodule{{(r+1)}n}$.
\end{proof}

\subsection{Verification of property \eqref{PROPERTY:infnorm}}
\label{SEC:p2}
We may decompose the polynomial ${\tilde F = F + \epsilon}$ into the multivariate Chebyshev basis \eqref{EQ:multiChebyexp}:
\[
	\tilde F = \epsilon + \sum_{\kappa \in \N^n_d} 2^{w(\kappa)} F_\kappa \Cheby{\kappa}, \quad \text{where } F_{\kappa} = \Chebyinner{F}{T_\kappa}.
\]
By Lemma \ref{LEM:KernelDiag}, we then have:
\begin{equation}
\label{EQ:infnormbound}
\begin{split}
\infnorm{\Kernel_r^{-1} \tilde \scaledfunc - \tilde \scaledfunc} 
&= \infnorm{\sum_{\kappa \in \N^n_d} (1/\jacksoncoef{\kappa}{r}) 2^{w(\kappa)} \scaledfunc_\kappa \Cheby{\kappa} - 2^{w(\kappa)}\scaledfunc_\kappa \Cheby{\kappa}} \\
&\leq \sum_{\kappa \in \N^n_d} 2^{w(\kappa)} |\scaledfunc_\kappa| |1-1/\jacksoncoef{\kappa}{r}|,
\end{split}
\end{equation}
making use of the fact that $\lambda_0 = 1$ and $|\Cheby{\kappa}(x)| \leq 1$ for all $x \in \cube{n}$. It remains to analyze the expression at the right-hand side of $\eqref{EQ:infnormbound}$. First, we bound the size of $|\scaledfunc_\kappa|$ for $\kappa \in \N^n$.
\begin{lemma}\label{lemkappa}
We have $|\scaledfunc_\kappa| = |\Chebyinner{\scaledfunc}{\Cheby{\kappa}}| \leq 2^{-w(\kappa) / 2}$ for all $\kappa \in \N^n$.
\end{lemma}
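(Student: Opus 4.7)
The plan is to exploit the fact that the scaled polynomial $\scaledfunc$ is uniformly bounded on $\cube{n}$ and combine this with the orthogonality relations for the multivariate Chebyshev basis via Cauchy--Schwarz.

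First I would observe that by construction $\scaledfuncmin = 0$ and $\scaledfuncmax = 1$, so that $0 \le \scaledfunc(\x) \le 1$ for every $\x \in \cube{n}$. In particular $\infnorm{\scaledfunc} \le 1$. Since $\mu$ is a probability measure on $\cube{n}$, this uniform bound transfers to the $L^2(\mu)$ norm, giving
\[
\Chebynorm{\scaledfunc}^2 = \int_{\cube{n}} \scaledfunc(\x)^2 \, d\mu(\x) \le \int_{\cube{n}} d\mu(\x) = 1,
\]
so $\Chebynorm{\scaledfunc} \le 1$.

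Next I would apply the Cauchy--Schwarz inequality with respect to the inner product $\Chebyinner{\cdot}{\cdot}$ to estimate the Chebyshev coefficient:
\[
|\scaledfunc_\kappa| = |\Chebyinner{\scaledfunc}{\Cheby{\kappa}}| \le \Chebynorm{\scaledfunc} \cdot \Chebynorm{\Cheby{\kappa}}.
\]
The orthogonality relations \eqref{EQ:orthorelationsmulti} directly yield $\Chebynorm{\Cheby{\kappa}}^2 = 2^{-w(\kappa)}$ for $\kappa \ne 0$, and also $\Chebynorm{\Cheby{0}}=1$, which matches the formula since $w(0)=0$. Combining the two bounds gives $|\scaledfunc_\kappa| \le 2^{-w(\kappa)/2}$ for all $\kappa \in \N^n$, as required.

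This argument is essentially mechanical and I do not expect any real obstacle; the only thing to be careful about is to record the $\kappa = 0$ case separately (where the bound $|\scaledfunc_0| \le 1$ is trivial from $\infnorm{\scaledfunc}\le 1$), and to note that the sharper estimate $\Chebynorm{\scaledfunc} \le 1$, rather than just $\infnorm{\scaledfunc} \le 1$, is what produces the exponentially decaying factor $2^{-w(\kappa)/2}$ in the number of ``active'' coordinates of $\kappa$, a decay that will subsequently be crucial when summing over $\kappa \in \N^n_d$ in the estimate \eqref{EQ:infnormbound}.
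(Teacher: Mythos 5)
Your proof is correct and follows essentially the same route as the paper: bound $\Chebynorm{\scaledfunc}\le\infnorm{\scaledfunc}\le 1$ using that $\mu$ is a probability measure, then apply Cauchy--Schwarz together with $\Chebynorm{\Cheby{\kappa}}=2^{-w(\kappa)/2}$ from the orthogonality relations. The only (harmless) difference is that you explicitly spell out the $\kappa=0$ case, which the paper leaves implicit.
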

\begin{proof}
Since $\mu$ is a probability measure on $\cube{n}$, we have $\Chebynorm{\scaledfunc} \leq \infnorm{\scaledfunc} \leq 1$. Using the Cauchy-Schwarz inequality and \eqref{EQ:orthorelationsmulti}, we then find:
\[
	\Chebyinner{\scaledfunc}{\Cheby{\kappa}} \leq \Chebynorm{\scaledfunc_\kappa} \Chebynorm{\Cheby{\kappa}} \leq \Chebynorm{\Cheby{\kappa}} = 2^{-w(\kappa) / 2}.
\]
\end{proof}
\noindent
To bound the parameter $|1-1/\jacksoncoef{\kappa}{r}|$, we first prove a bound on $|1-\jacksoncoef{\kappa}{r}|$, which we obtain by applying Bernoulli's inequality.
\begin{lemma}[Bernoulli's inequality]
For any $x \in [0, 1]$ and $t \geq 1$, we have:
\begin{equation}
	\label{EQ:Bernoulli}
	1 - (1-x)^t \leq tx.
\end{equation}
\end{lemma}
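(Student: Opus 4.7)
The plan is to prove Bernoulli's inequality $1-(1-x)^t \leq tx$ by a short monotonicity argument. I would introduce the auxiliary function $\phi(x) := tx - 1 + (1-x)^t$ on $[0,1]$ and show that $\phi \geq 0$ throughout the interval; rearranging $\phi(x) \geq 0$ gives precisely the claimed inequality.

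The steps are as follows. First, observe that $\phi(0) = 0$, so it is enough to show that $\phi$ is non-decreasing on $[0,1]$. Differentiating gives
\[
\phi'(x) = t - t(1-x)^{t-1} = t\bigl(1-(1-x)^{t-1}\bigr).
\]
Since $x \in [0,1]$ implies $1-x \in [0,1]$ and $t-1 \geq 0$, we have $(1-x)^{t-1} \leq 1$, and hence $\phi'(x) \geq 0$ on the entire interval. It follows that $\phi(x) \geq \phi(0) = 0$ for every $x \in [0,1]$, which is the desired inequality.

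Equivalently, and perhaps more compactly, one can simply write
\[
1 - (1-x)^t = \int_0^x t(1-u)^{t-1}\, du \leq \int_0^x t\, du = tx,
\]
where the inequality uses $(1-u)^{t-1} \leq 1$ for $u \in [0,1]$ and $t \geq 1$. This integral form makes the role of the hypothesis $t \geq 1$ transparent: it is exactly what ensures the integrand is bounded by $t$.

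There is no real obstacle in this proof; the statement is essentially the tangent-line bound for the convex function $y \mapsto (1-y)^t$ at $y = 0$, and any of the standard one-line verifications (calculus, convexity, or the integral representation above) will do.
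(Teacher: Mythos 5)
The paper states Bernoulli's inequality without proof, treating it as a standard fact; there is therefore no ``paper's argument'' to compare against. Your proof is correct: both the monotonicity argument via $\phi(x) = tx - 1 + (1-x)^t$ (with $\phi(0)=0$ and $\phi' \geq 0$) and the integral representation $1-(1-x)^t = \int_0^x t(1-u)^{t-1}\,du \leq tx$ are sound, and the integral form cleanly isolates exactly where the hypothesis $t\geq 1$ enters.
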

\begin{lemma}\label{lemlambda}
For any $\kappa \in \N^n_d$ and $r \geq \pi d$, we have:
\[
	|1 - \jacksoncoef{\kappa}{r}| \leq \frac{n \pi^2 d^2}{r^2}.
\]
\end{lemma}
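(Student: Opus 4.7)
The idea is to reduce the multivariate bound to the univariate estimate in Proposition~\ref{PROP:jacksoncoefficients}(iii) by factoring the eigenvalue $\jacksoncoef{\kappa}{r} = \prod_{i=1}^n \jacksoncoef{\kappa_i}{r}$ and then exploiting Bernoulli's inequality to pass from a product to a sum. Concretely, I would first observe that since $\kappa \in \N^n_d$ each coordinate satisfies $\kappa_i \leq d$, so Proposition~\ref{PROP:jacksoncoefficients}(iii) yields
\[
0 \leq 1 - \jacksoncoef{\kappa_i}{r} \leq \frac{\pi^2 d^2}{(r+2)^2} \quad (1 \leq i \leq n),
\]
and Proposition~\ref{PROP:jacksoncoefficients}(ii) guarantees each factor $\jacksoncoef{\kappa_i}{r}$ lies in $(0,1]$. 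Setting $x := \pi^2 d^2 / (r+2)^2$, this gives the uniform lower bound $\jacksoncoef{\kappa_i}{r} \geq 1 - x$, and because all factors also lie in $[0,1]$, we may multiply these bounds to conclude
\[
\jacksoncoef{\kappa}{r} = \prod_{i=1}^n \jacksoncoef{\kappa_i}{r} \geq (1-x)^n.
\]

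Next, I would apply Bernoulli's inequality \eqref{EQ:Bernoulli} with $t = n \geq 1$; this requires $x \in [0,1]$, which follows from the hypothesis $r \geq \pi d$ (yielding $(r+2)^2 \geq \pi^2 d^2$). Bernoulli's inequality gives $1 - (1-x)^n \leq nx$, and combining with the previous step,
\[
|1 - \jacksoncoef{\kappa}{r}| = 1 - \jacksoncoef{\kappa}{r} \leq 1 - (1-x)^n \leq n x = \frac{n\pi^2 d^2}{(r+2)^2} \leq \frac{n\pi^2 d^2}{r^2},
\]
which is the desired inequality.

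There is no real obstacle here: the main subtlety is simply ensuring the hypotheses of Bernoulli are met (the condition $r \geq \pi d$ is exactly what makes $x \leq 1$) and that the switch from the multiplicative bound to the additive one is done cleanly. One could sharpen the argument by using the sharper tail $1 - \jacksoncoef{\kappa_i}{r} \leq \pi^2 \kappa_i^2 / (r+2)^2$ visible in the proof of Proposition~\ref{PROP:jacksoncoefficients}(iii) together with $\sum_i \kappa_i^2 \leq d^2$, which would eliminate the factor of $n$; but since the stated bound only asks for $n\pi^2 d^2 / r^2$, the simpler uniform estimate above suffices.
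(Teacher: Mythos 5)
Your proof is correct and follows essentially the same route as the paper: factor $\jacksoncoef{\kappa}{r}$ coordinatewise, invoke Proposition~\ref{PROP:jacksoncoefficients}(ii)--(iii) for the univariate bound, and pass from the product to $n$ times the maximum via Bernoulli's inequality, with the hypothesis $r \ge \pi d$ precisely ensuring Bernoulli applies. Your closing remark that the factor of $n$ could be removed by using the sharper per-coordinate bound $1 - \jacksoncoef{\kappa_i}{r} \le \pi^2\kappa_i^2/(r+2)^2$ together with $\sum_i \kappa_i^2 \le d^2$ is also a valid (and, given the discussion in the paper's concluding remarks about a dimension-free constant, worthwhile) observation, though as you say it is not needed for the stated inequality.
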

\begin{proof}
By Proposition \ref{PROP:jacksoncoefficients}, we know that $ 0 \leq \gamma_k := (1 - \jacksoncoef{k}{r}) \leq \pi^2d^2 / r^2 \leq 1$ for $0 \leq k \leq d$. Writing $\gamma := \max_{0 \leq k \leq d} \gamma_k$, we compute:
\[
	1 - \jacksoncoef{\kappa}{r} = 1 - \prod_{i=1}^n \jacksoncoef{\kappa_i}{r} = 1 - \prod_{i=1}^n (1 - \gamma_{\kappa_i}) \leq 1 - (1 - \gamma)^n \leq n \gamma \leq \frac{n \pi^2 d^2}{r^2},
\]
making use of \eqref{EQ:Bernoulli} for the second to last inequality.
\end{proof}
\begin{lemma} \label{LEM:rlarge}
Assuming that $r \geq \pi d \sqrt{2n}$, we have:
\[
	| 1 - 1/\jacksoncoef{\kappa}{r}| \leq \frac{2n \pi^2 d^2}{r^2}.
\]
\end{lemma}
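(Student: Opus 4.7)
The plan is to bootstrap the previous Lemma \ref{lemlambda} via the elementary identity
\[
\Big|1-\frac{1}{\jacksoncoef{\kappa}{r}}\Big| = \frac{|1-\jacksoncoef{\kappa}{r}|}{\jacksoncoef{\kappa}{r}}.
\]
So the task reduces to showing that $\jacksoncoef{\kappa}{r}$ is bounded away from zero, specifically that $\jacksoncoef{\kappa}{r} \geq 1/2$, once $r \geq \pi d \sqrt{2n}$.

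First I would invoke Lemma \ref{lemlambda} to get $|1-\jacksoncoef{\kappa}{r}| \leq n\pi^2 d^2 / r^2$. The hypothesis $r \geq \pi d \sqrt{2n}$ rearranges to $r^2 \geq 2n\pi^2 d^2$, which gives $n\pi^2 d^2/r^2 \leq 1/2$. Combined with Proposition \ref{PROP:jacksoncoefficients}(ii), which ensures $\jacksoncoef{\kappa}{r} \leq 1$ (hence $1-\jacksoncoef{\kappa}{r} \geq 0$), we obtain $\jacksoncoef{\kappa}{r} = 1 - (1-\jacksoncoef{\kappa}{r}) \geq 1/2$.

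Plugging this back in, I would then conclude
\[
\Big|1-\frac{1}{\jacksoncoef{\kappa}{r}}\Big| = \frac{1-\jacksoncoef{\kappa}{r}}{\jacksoncoef{\kappa}{r}} \leq 2(1-\jacksoncoef{\kappa}{r}) \leq \frac{2n\pi^2 d^2}{r^2},
\]
which is the desired bound. There is no real obstacle here since everything follows directly from Lemma \ref{lemlambda} and the size assumption on $r$; the only subtle point is checking that the threshold $\pi d \sqrt{2n}$ is exactly what forces $\jacksoncoef{\kappa}{r} \geq 1/2$, which is immediate from the square-root structure of the hypothesis.
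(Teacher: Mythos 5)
Your proof is correct and follows essentially the same route as the paper's: invoke Lemma \ref{lemlambda} and the hypothesis on $r$ to get $|1-\jacksoncoef{\kappa}{r}|\le 1/2$, deduce $\jacksoncoef{\kappa}{r}\ge 1/2$, and bound $|1-1/\jacksoncoef{\kappa}{r}| = |1-\jacksoncoef{\kappa}{r}|/\jacksoncoef{\kappa}{r}\le 2|1-\jacksoncoef{\kappa}{r}|$. Your extra appeal to Proposition \ref{PROP:jacksoncoefficients}(ii) to drop the absolute value is harmless but not needed, since $|1-\jacksoncoef{\kappa}{r}|\le 1/2$ alone already pins $\jacksoncoef{\kappa}{r}$ to $[1/2,3/2]$.
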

\begin{proof}
Under the assumption, and using the previous lemma, we have $| 1 - \jacksoncoef{\kappa}{r} | \leq 1/2$, which implies that $\jacksoncoef{\kappa}{r} \geq 1/2$. We may then bound:
\[
|1 - 1/\jacksoncoef{\kappa}{r}| = | \frac{1 - \jacksoncoef{\kappa}{r}}{\jacksoncoef{\kappa}{r}}| \leq 2 |1 - \jacksoncoef{\kappa}{r}| \leq \frac{2n \pi^2 d^2}{r^2}.
\]
\end{proof}
\noindent
Putting things together and using (\ref{EQ:infnormbound}), Lemma \ref{lemkappa} and Lemma \ref{lemlambda}  we find that:
\begin{align*}
\infnorm{\Kernel_r^{-1} \tilde \scaledfunc - \tilde \scaledfunc} &\leq 
\sum_{\kappa \in \N^n_d} 2^{w(\kappa)} |\scaledfunc_\kappa| |1-1/\jacksoncoef{\kappa}{r}| \\
&\leq \sum_{\kappa \in \N^n_d} 2^{w(\kappa) / 2} \cdot \frac{2n \pi^2 d^2}{r^2} 
\leq |\N^n_d| \cdot \max_{\kappa \in \N^n_d} 2^{w(\kappa) / 2} \cdot \frac{2{n}\pi^2 d^2}{r^2}.
\end{align*}
Hence $\Kernel_r$ satisfies \eqref{PROPERTY:infnorm} with $\epsilon=C(n,d)/r^2$, where:
\[
	C(n, d) := |\N^n_d| \cdot \max_{\kappa \in \N^n_d} 2^{w(\kappa) / 2} \cdot 2{n} \pi^2 d^2.
\]
In view of Lemma \ref{LEM:outline}, we have thus proven Theorem \ref{THM:main}.
Finally, we can bound the constant $C(n,d)$ in two ways. On the one hand, we have:
\[
 |\N^n_d|=	{n+d\choose n} = \prod_{i=1}^n {d+i\over i}\leq (d+1)^n \text{ and } \max_{\kappa \in \N^n_d} w(\kappa) \leq n,
\]
resulting in a polynomial dependence of $C(n, d)$ on $d$ for fixed $n$.  On the other hand, we have:
\[
	|\N^n_d|={n+d\choose d}  \leq (n+1)^d \text{ and } \max_{\kappa \in \N^n_d} w(\kappa) \leq d,
\]
resulting in a polynomial dependence of $C(n, d)$ on $n$ for fixed $d$. Namely, we have:
\begin{equation}\label{eqCnd}
C(n,d)\le 2\pi^2 d^2 {n} 2^{n/2}(d+1)^n  \quad \text{ and } \quad C(n,d)\le 2\pi^2 d^2{n} 2^{d/2}  (n+1)^d.
\end{equation}

\section{Concluding remarks}
We have shown that the error of the degree {$r$} Lasserre-type bound \eqref{EQ:lasr} for the minimization of a polynomial over the hypercube $[-1,1]^n$ is of the order $O(1/r^2$) when using a sum-of-squares decomposition in the truncated preordering. Alternatively, if $f$ is a polynomial nonnegative on $[-1, 1]^n$ and $\eta > 0$, our result may be interpreted as showing a bound in $O(1/\sqrt{\eta})$ on the degree of a Schm\"udgen-type certificate of positivity for $f+\eta$. The dependence on the dimension $n$ and the degree $d$ of $f$ in the constants of our result is both polynomial in $n$ (for fixed $d$), and polynomial in $d$ (for fixed $n$).

\subsection*{{The constant $C(n, d)$}}
A question left open in this work is whether it is possible to show Theorem \ref{THM:main} with a constant $C(d)$ that only depends on the degree $d$ of $f$, and not on the number of variables $n$ (cf. \eqref{eqCnd}).
This question is motivated by the fact that for the analysis of the analogous hierarchies for the unit sphere in \cite{FangFawzi2021} and for the boolean hypercube in \cite{SlotLaurent2020b} the existence of such a constant (depending only on $d$) was in fact shown.

\subsection*{{Relation to recent developments}}
Recently, there has been growing interest in obtaining a sharper convergence analysis for various Lasserre-type hierarchies for the minimization of a polynomial $f$ over a semialgebraic set $S = \{{\x \in \R^n} : g_j(\x) \ge 0 ~~ (j\in [m]) \}$. Our work thus contributes to this research area. We outline some recent developments.

We refer to the works \cite{deKlerkLaurent2020,SlotLaurent2021b} (and further references therein) for the analysis of hierarchies of upper bounds (obtained by minimizing the expected value of $f$ on $S$ with respect to a sum-of-squares density). 

The most commonly used hierarchies of lower bounds are defined in terms of sums-of-squares decompositions in the \emph{quadratic module} of $S$, being the set of conic combinations of the form $\sigma_0+\sum_{j=1}^m \sigma_jg_j$ with $\sigma_j\in \Sigma[\x]$. {Such decompositions are called \emph{Putinar-type} certificates}. In comparison, the \emph{preordering} $Q(S)$ also involves conic combinations of the \emph{products} of the $g_j$. In \cite{NieSchweighofer} a degree bound in $O(\exp(\eta^{-c}))$ is given for the quadratic module, where $c>0$ is a constant depending on $S$. 

In a {recent} work \cite{BaldiMourrain2021b}, Baldi \& Mourrain are able to improve this result to obtain a bound with a polynomial dependency on $\eta$. Roughly speaking, their method of proof relies on embedding the semialgebraic set $S$ in a box $[-R, R]^n$ of large enough size $R>0$, and then relating positivity certificates on $S$ to those on $[-R, R]^n$. Our present result on $[-1, 1]^n$ then allows them to conclude their analysis. Their argument relies on the fact the constant $C(n, d)$ in Theorem~\ref{THM:main} may be chosen to depend polynomially on the degree $d$ of $f$. Such a dependence was not shown in the earlier work \cite{deKlerkLaurent2010}.

{Note that it has been shown in \cite{Nie2014} that the hierarchies of bounds based on Putinar type representations have {\em finite} convergence for {\em generic} problems. However, and perhaps somewhat surprisingly, their convergence analysis (for general problems) has remained a challenging problem.}

We also wish to note that a polynomial degree bound was shown already in \cite{MaiMagron-PV-2021} for a slightly different hierarchy, based on  Putinar-Vasilescu type representations, which give a decomposition in the quadratic module after multiplying the polynomial $f+\eta$ by a suitable power $(1+\sum_{i=1}^n\|\x\|^2)^k$  (under some conditions).

\subsection*{{Putinar vs. Schm\"udgen on the hypercube}}
{As mentioned, Putinar-type hierarchies (making use of the quadratic module) are more commonly applied in practice than the Schm\"udgen-type hierarchy (making use of the preordering) that we consider in this paper. It is therefore natural to consider the status of convergence results for Putinar-type hierarchies on the hypercube $\cube{n}$.}

{
Magron~\cite{Magron} shows a degree bound in $O(\exp(c \eta^{-1}))$ for Putinar-type certificates of $f + \eta$ on $\cube{n}$, improving the general result of~\cite{NieSchweighofer} in this special case\footnote{The cube $[0, 1]^n$ is considered in~\cite{Magron}, but all results carry over immediately to $[-1, 1]^n$ after an affine change of variables.}. His result relies on the degree bound in $O(\eta^{-1})$ for \emph{Schm\"udgen}-type certificates on $\cube{n}$ shown in~\cite{deKlerkLaurent2010}. Importantly, it is contingent on an unresolved conjecture also posed in~\cite{deKlerkLaurent2010}: For each $n \in \N$ even, the polynomial $2^{-n} (1-x_1)(1-x_2) \ldots (1-x_n) + \eta$ lies in the quadratic module of $B^n$ truncated at degree $n$ for $\eta = \frac{1}{n(n+2)}$. {This open question, which asks for an exact estimation of the constant that needs to be added to each generator of the preordering of $Q([-1,1]^n)$ in order to ensure membership in the quadratic module, remains interesting in itself.}
}

{
In principle, our new degree bounds for Schm\"udgen-type certificates on $\cube{n}$ could (slightly) improve  the result of Magron (which relies on the weaker bounds of~\cite{deKlerkLaurent2010}). However, such an improvement would still depend exponentially on $1/\eta$, in addition to being contingent on a conjecture. Furthermore, it seems to us that it is in any case superseded by the new result of Baldi \& Mourrain~\cite{BaldiMourrain2021b}  mentioned above, which (when specialized to the hypercube) shows degree bounds for Putinar-type certificates with \emph{polynomial} dependency on $1/\eta$.
It is an open question whether the degree bound in $O(1/\sqrt{\eta})$ we have shown here for Schm\"udgen-type certificates on $\cube{n}$ may be extended to Putinar-type certificates.
}

{
Lastly, we wish to mention that error bounds for the Putinar-type Lasserre hierarchy on the hypercube $\cube{n}$ were already provided in~\cite{Nie}. There, however, the author considers a regime where the order $r$ of the relaxation is fixed, while the dimension $n$ tends to infinity. His results are therefore not directly comparable to those of the present paper or to those discussed above.
}

\subsection*{{Negative results}}
{We have so far focused our discussion on \emph{positive} results concerning sum-of-squares representations. That is, results that give \emph{upper} bounds on the error of Lasserre's bound~\eqref{EQ:lasr}; or equivalently on the required degree of Schm\"udgen-type positivity certificates. In order to put these results in context, it would be interesting to have compl ementary \emph{negative} results, thus giving \emph{lower} bounds on the convergence rate of the Lasserre hierarchy.}

{The only applicable negative result known to the authors is due to Stengle~\cite{Stengle1996}. He considers the interval $[-1, 1] \subseteq \R$ with the semialgebraic description:
\[
[-1, 1] = \{ x \in \R : (1-x^2)^3 \geq 0\}.
\] 
Note that this description is different from the (more natural) description~\eqref{EQ:semialgdesc} that we have used in this paper. In particular, Theorem~\ref{THM:MarkovLukacz} does not apply to it. Writing $Q((1-x^2)^3)_r$ for the corresponding (truncated) preordering, Stengle shows that
\[
	1-x^2 + \eta \in Q((1-x^2)^3)_r
\]
only when $r = \Omega(1/\sqrt{\eta})$. In other words, he shows for $f(x) = 1-x^2$ that the Lasserre-type bound $\richlowbound{r}$ obtained by replacing $Q(1-x^2)_r$ in~\eqref{EQ:lasr} by $Q((1-x^2)^3)_r$ satisfies:
\[
\funcmin - \richlowbound{r} = \Omega(1/r^2).
\]
On the one hand, it is remarkable that Stengle's lower bound in $\Omega(1/r^2)$ matches the upper bound in $O(1/r^2)$ we show in this paper exactly. On the other hand, we emphasize that Stengle's result relies heavily on the nonstandard description of $[-1, 1]$ as a semialgebraic set. We leave the question of proving negative results for the standard description~\eqref{EQ:semialgdesc} for future research.
}

\subsubsection*{Acknowledgments}
We thank Lorenzo Baldi and Bernard Mourrain for their helpful suggestions. We also thank Etienne de Klerk and Felix Kirschner for useful discussions. {Furthermore, we thank Markus Schweighofer for bringing to our attention the paper of Stengle~\cite{Stengle1996}}. {We are grateful to two reviewers for their careful reading and useful suggestions; in particular we thank the referees for bringing the papers \cite{Magron,Nie} to our attention.}

\end{document}